\newcommand{\pa}{\partial}
\newcommand{\la}{\label}
\newcommand{\fr}{\frac}
\newcommand{\na}{\nabla}
\newcommand{\be}{\begin{equation}}
\newcommand{\bal}{\begin{aligned}}
\newcommand{\eal}{\end{aligned}}
\newcommand{\ee}{\end{equation}}
\newcommand{\ba}{\begin{array}{l}}
\newcommand{\ea}{\end{array}}
\newcommand{\beg}{\begin}
\renewcommand{\div}{{\mbox{div}\,}}
\newcommand{\D}{\Delta}
\newtheorem{thm}{Theorem}
\newcommand{\R}{\mathbb R}
\theoremstyle{plain}
\newtheorem{Thm}{Theorem}[section]
\newtheorem{lem}[Thm]{Lemma}
\newtheorem{prop}[Thm]{Proposition}
\theoremstyle{definition}
\theoremstyle{remark}
\newtheorem{rem}{Remark}
\renewcommand{\div}{{\mbox{div}\,}}
\numberwithin{equation}{section}
\title{Logarithmic Sobolev Inequalities for Bounded Domains and Applications to Drift-Diffusion Equations}
\author[E. Abdo]{Elie Abdo}
\address[E. Abdo]
{	Department of Mathematics \\
     University of California  \\
	Santa Barbara, CA 93106-3080, USA.} \email{elieabdo@ucsb.edu}
\author[F.-N. Lee] {Fizay-Noah Lee}
\address[F.-N. Lee] 
{Department of Mathematics\\
Vanderbilt University\\
Nashville, TN 37235, USA. }
\email{noah.lee@vanderbilt.edu}
\begin{document}

\begin{abstract}
We prove logarithmic Sobolev inequalities on higher-dimensional bounded smooth domains based on novel Gagliardo-Nirenberg type interpolation inequalities. Moreover, we use them to address the long-time dynamics of some nonlinear nonlocal drift-diffusion models and prove the exponential decay of their solutions to constant steady states. 
\end{abstract} 

\keywords{logarithmic Sobolev inequalities, interpolation, inequalities, Nernst-Planck equations, long-time dynamics}

\maketitle
\tableofcontents

\section{Introduction}

Logarithmic Sobolev inequalities, initiated by Stam in \cite{S}, have been extensively studied in the literature over the last five decades due to their relevance in quantum field theory, probability theory, and geometry. Such inequalities have been initially studied for Gaussian measures \cite{AC, G, S} and then generalized to Euclidean measures (see for instance \cite[Chapter 8]{LL}, \cite{NS}, and references therein).

In this paper, we derive Logarithmic Sobolev inequalities for bounded domains, and we show how they can serve as a tool to study the long-time dynamics of some nonlinear nonlocal drift-diffusion equations on $d$-dimensional bounded connected domains with smooth boundaries. More precisely, given such a domain $\Omega \subset \R^d$ for $2 \le d \le 6$, and an integer $p \ge 3$ obeying $(d-2)p \le 2d$, we prove that 
\be \la{lsi}
\int_{\Omega} g \ln \frac{g}{\bar{g}} dx
\le A_p {\bar{g}}^{\frac{p-4}{p-2}}\left\|\na \left(g^{\fr{1}{p-2}}\right)\right\|_{L^2}^2, \hspace{1cm} \bar{g} = \fr{1}{|\Omega|} \int_{\Omega} g(x) dx
\ee holds for nonnegative scalar functions $g \in H^1(\Omega)$, where $A_p$ is a positive constant depending only on $p$, $d$, and the diameter of $\Omega$.  Our proof is based on the following three main ingredients:
\begin{enumerate}
    \item Novel Gagliardo-Nirenberg type interpolation inequalities of the form
    \be \la{gni}
\int_{\Omega} q(x)^p dx
\le \bar{q}^2 \|q\|_{L^{p-2}}^{p-2} + \sum\limits_{i=1}^{p} C_i \bar{q}^{\alpha_i} \| q - \bar{q}\|_{L^2}^{\beta_i} \|\na q\|_{L^2}^{\gamma_i}, \hspace{1cm} \bar{q} =  \frac{1}{|\Omega|}\int_{\Omega} q(x) dx,
    \ee that hold for nonnegative functions $q$ and integers $p \ge 2$ obeying $(d-2)p \le 2d$. The nonnegative constants $C_i$ depend only on $p$ and the dimension $d$, and the nonnegative numbers $\alpha_i, \beta_i, \gamma_i$  satisfy $\alpha_i + \beta_i + \gamma_i = p$ and $\beta_i + \gamma_i \ge 2$ for any $i \in \left\{1, \dots, p \right\}$. The 
    term $\bar{q}^2 \|q\|_{L^{p-2}}^{p-2}$ on the right-hand side of \eqref{gni} is sharp and is crucial for the derivation of the desired estimates \eqref{lsi}.
    \item The concavity of the logarithm function, which allows for the application of Jensen's inequality and reduces the task of establishing \eqref{lsi} to a problem of seeking good control of some $L^p$ norm. 
    \item Basic logarithmic estimates 
    \be 
\ln \left(1 + \sum\limits_{i=1}^{n} C_i x^i \right) \le Cx
    \ee that hold for any $x \ge 0$ and some positive universal constant $C=C(C_1,\dots,C_n)$.  
\end{enumerate}

One of the main challenges is the derivation of \eqref{gni}, which implies (from applications of the classical $L^2$ Poincar\'e inequality) that the $p$-th power of the $L^p$ norm of $q$ is controlled by a linear combination of powers of $\|\na q\|_{L^2}$ plus the sharp term $\bar{q}^2 \|q\|_{L^{p-2}}^{p-2}$ (which is simply bounded by $1$ for specific choices of $q$). Additionally, we must ensure that the linear combination of powers of $\|\na q\|_{L^2}$ does \textit{not} include low powers; in particular, the exponents must be at least $2$. To this end, we seek a special decomposition of the $L^p$ norm of $q$, which allows us to make use of standard interpolation inequalities while preserving the aforementioned desired properties.  

This paper is organized as follows. In Section \ref{II}, we derive sharp Gagliardo-Nirenberg type interpolation inequalities \eqref{gni}, which apply to nonnegative functions in $H^1(\Omega)$. Then, in Section \ref{LSI}, we use the interpolation inequalities from Section \ref{II} to derive the logarithmic Sobolev inequalities for bounded domains \eqref{lsi}. Lastly, in Section \ref{app}, we show how the logarithmic Sobolev inequalities from Section \ref{LSI} can be used to prove exponential convergence to equilibrium for the Nernst-Planck system, which is a nonlinear, nonlocal drift-diffusion system. We show that the rate of convergence depends naturally only on the domain and the diffusion coefficients.

\section{Interpolation Inequalities}\la{II}

In this section, we prove a class of sharp Gagliardo-Nirenberg type inequalities. The inequality \eqref{interpolation} is sharp in the sense that if $q$ is taken to be a positive constant $c$, then we have 
\be 
\int_{\Omega} q(x)^p dx = c^p |\Omega| =  c^2 c^{p-2}  |\Omega| = \bar{q}^2 \|q\|_{L^{p-2}}^{p-2}.
\ee Indeed, \eqref{interpolation} gives the exact corrector arising from interpolating a  scalar function with a non-vanishing average. This will be used to establish a logarithmic Sobolev inequality that could serve as a main tool to study the long-time behavior of solutions to a large class of nonlinear nonlocal drift-diffusion equations. 

\beg{prop} \la{inte} Let $d \ge 2$. Let $\Omega \subset \R^d$ be a bounded, connected domain with smooth boundary. If $d=2$, then let $p \ge 2$ be an integer. If $d\ge 3$, then let $p$ be an integer such that $2\le p\le \frac{2d}{d-2}$. Then the following interpolation inequality
\be \la{interpolation}
\beg{aligned}
\int_{\Omega} q(x)^p dx 
&\le \bar{q}^2 \|q\|_{L^{p-2}}^{p-2} 
+ C\|q - \bar{q}\|_{L^2}^{\fr{2d+2p- pd}{2}} \|\na q\|_{L^2}^{\fr{d(p-2)}{2}} + C\bar{q}^{p-2}\|q - \bar{q}\|_{L^2}^{\fr{2d+ 2p - dp}{p}}\|\na q\|_{L^2}^{\fr{d(p-2)}{p}}
\\&+C\sum\limits_{k=3}^{p-1} \bar{q}^{k-2} \|q-\bar{q}\|_{L^2}^{(p-k+2)-\fr{d(p-k)}{2}}\|\na q\|_{L^2}^{\fr{d(p-k)}{2}}
+ C\bar{q}^{p-2}\|\na q\|_{L^2}^2
\end{aligned}
\ee holds for any nonnegative scalar function $q \in H^1(\Omega)$. Here $\bar{q}$ denotes the average of $q$ over $\Omega$, and $C$ is a positive constant that depends only on $d$, $p$ and the diameter of $\Omega$. (We define $\|q\|_{L^0}^0:=|\Omega|$, and we assume the summation is empty if $p\le 3$.)
\end{prop}

\beg{proof}
We decompose the integral $\int_{
\Omega} q^p$ into the sum of three terms, $I_1, I_2$ and $I_3$, where
\beg{align}
I_1 &= \int_{\Omega} (q - \bar{q})^2 q^{p-2} dx,
\\I_2 &= -\bar{q}^2 \int_{\Omega} q^{p-2} dx, 
\\I_3 &= 2\bar{q} \int_{\Omega} q^{p-1} dx. 
\end{align}
We apply H\"older's inequality and make use of Sobolev embeddings and Gagliardo-Nirenberg interpolation inequalities to estimate $I_1$ as follows,
\be 
\beg{aligned}
I_1 
&\le \|q - \bar{q}\|_{L^p}^2 \|q^{p-2}\|_{L^{\fr{p}{p-2}}} 
= \|q - \bar{q}\|_{L^p}^2   \|q\|_{L^{p}}^{p-2}
\\&\le C\|q - \bar{q}\|_{L^p}^2 \left(\|q - \bar{q}\|_{L^p}^{p-2} + |\bar{q}|^{p-2} \right)
\\&\le C\|q - \bar{q}\|_{L^2}^{\fr{2p - dp 
+ 2d}{p}}\|\na q\|_{L^2}^{\fr{d(p-2)}{p}}  \left(\|q - \bar{q}\|_{L^2}^{\fr{(p-2)(2p - dp +2d)}{2p}} \|\na q\|_{L^2}^{\fr{d(p-2)^2}{2p}} + |\bar{q}|^{p-2} \right)
\\&\le C\|q - \bar{q}\|_{L^2}^{\fr{2d+2p- pd}{2}} \|\na q\|_{L^2}^{\fr{(p-2)d}{2}} + C|\bar{q}|^{p-2}\|q - \bar{q}\|_{L^2}^{\fr{2p - dp 
+ 2d}{p}}\|\na q\|_{L^2}^{\fr{d(p-2)}{p}}.
\end{aligned}
\ee 
Now we decompose $I_3$ into the sum of four terms, $I_{3,1}, I_{3,2}$, $I_{3,3}$, and $I_{3,4}$, where 
\beg{align}
I_{3,1} &= 2\sum\limits_{k=3}^{p-1} \bar{q}^{k-2} \int_{\Omega} (q - \bar{q})^2 q^{p-k} dx \left(=0 \text{ if } p\le 3\right),
\\I_{3,2} &= 2 \bar{q}^{p-2} \int_{\Omega} (q - \bar{q})^2 dx,
\\I_{3,3} &= 2 \bar{q}^{p-1} \int_{\Omega} (q - \bar{q}) dx,
\\I_{3,4} &= 2\bar{q}^2 \int_{\Omega} q^{p-2} dx.
\end{align} It is evident that the quantity $I_{3,3}$ vanishes. An application of the Poincar\'e inequality to the mean-free function $q - \bar{q}$ yields the bound
\be 
I_{3,2} \le C\bar{q}^{p-2} \|\na q\|_{L^2}^2.
\ee As for the term $I_{3,1}$, (assuming $p>3$) we apply H\"older's inequality, interpolate, and obtain 
\be
\beg{aligned}
I_{3,1} 
&\le C\sum\limits_{k=3}^{p-1} \bar{q}^{k-2}  \int_{\Omega} (q-\bar{q})^2\left((q - \bar{q})^{p-k} + \bar{q}^{p-k}\right)
\\&\le C\sum\limits_{k=3}^{p-1} \bar{q}^{k-2}\|q- \bar{q}\|_{L^{p-k+2}}^{p-k+2} + C\bar{q}^{p-2} \|q - \bar{q}\|_{L^2}^2
\\&\le C\sum\limits_{k=3}^{p-1} \bar{q}^{k-2} \|q-\bar{q}\|_{L^2}^{(p-k+2)-\fr{d(p-k)}{2}}\|\na q\|_{L^2}^{\fr{d(p-k)}{2}}
+ C\bar{q}^{p-2}\|\na q\|_{L^2}^2
\end{aligned}
\ee Putting all these estimates together, we infer that \eqref{interpolation} holds, ending the proof of Proposition \ref{inte}.
\end{proof}


\section{Logarithmic Sobolev Inequalities}\la{LSI}

Prior to proving the logarithmic Sobolev inequalities, we establish the following lemma, which, roughly speaking, gives us linear control of logarithmic terms.

\begin{lem} \la{log}
Let $f:[0,\infty)\to [0,\infty)$ be a $C^1$ 
function satisfying
\begin{align}
f(0)&=0\\
\fr{f'(x)}{1+f(x)}&\le A\quad\text{for some constant $A$, for all } x\ge 0.\la{growth}
\end{align}
Then $\ln(1+f(x))\le Ax$ for any $x\ge 0.$ In particular, for any $\alpha_1,...,\alpha_n\ge 1$, it holds that
\begin{align}
    \ln\left(1+\sum_{i=1}^n C_ix^{\alpha_i}\right)\le Ax
\end{align}
for any $x \ge 0$, where $A$ is a positive constant depending on the nonnegative constants $C_1,\dots,C_n$ and $\alpha_1,\dots,\alpha_n$.
\end{lem}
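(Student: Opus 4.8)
The plan is to prove the two assertions in sequence, with the second obtained by applying the first to an explicit choice of $f$. For the first assertion, I would introduce the auxiliary function $g(x) := \ln(1+f(x))$, which is well-defined and $C^1$ on $[0,\infty)$ since $f\ge 0$ and $f$ is $C^1$. The hypothesis $f(0)=0$ gives $g(0)=\ln 1 = 0$, and the chain rule gives $g'(x) = \frac{f'(x)}{1+f(x)}$, which is exactly the quantity bounded above by $A$ in \eqref{growth}. The fundamental theorem of calculus then yields
\be
\ln(1+f(x)) = g(x) = g(0) + \int_0^x g'(t)\,dt \le \int_0^x A\,dt = Ax
\ee
for every $x\ge 0$, which is the claimed inequality.

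For the second assertion, I would set $f(x) := \sum_{i=1}^n C_i x^{\alpha_i}$ and verify that it satisfies the hypotheses of the first part, so that the conclusion $\ln(1+f(x))\le Ax$ follows immediately for a suitable constant $A=A(C_1,\dots,C_n,\alpha_1,\dots,\alpha_n)$. Since each $\alpha_i\ge 1$, each monomial $x^{\alpha_i}$ vanishes at $x=0$, giving $f(0)=0$; moreover $f'(x)=\sum_{i=1}^n C_i\alpha_i x^{\alpha_i-1}$ is continuous on $[0,\infty)$ precisely because $\alpha_i-1\ge 0$, so $f\in C^1([0,\infty))$ and $f\ge 0$. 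It remains only to produce a finite constant $A$ with $\frac{f'(x)}{1+f(x)}\le A$ for all $x\ge 0$.

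The crux of the argument is this last uniform bound, and it is the step I expect to require the most care. I would estimate the ratio term by term: for each index $i$ with $C_i>0$,
\be
\frac{C_i\alpha_i x^{\alpha_i-1}}{1+\sum_{j=1}^n C_j x^{\alpha_j}} \le \frac{C_i\alpha_i x^{\alpha_i-1}}{1+C_i x^{\alpha_i}},
\ee
and the right-hand side, viewed as a function of $x$, is continuous on $[0,\infty)$ (it equals $0$ at $x=0$ if $\alpha_i>1$ and $C_i\alpha_i$ if $\alpha_i=1$) and tends to $0$ as $x\to\infty$, hence is bounded by some finite $M_i$. Indices with $C_i=0$ contribute nothing. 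Summing the finitely many bounded contributions gives $\frac{f'(x)}{1+f(x)}\le A$ with $A:=\sum_{i=1}^n M_i$. Alternatively, one may split into the regions $0\le x\le 1$ and $x\ge 1$: on the former each $x^{\alpha_i-1}\le 1$ so the ratio is at most $\sum_i C_i\alpha_i$, while on the latter the denominator dominates each numerator so the ratio is at most $\sum_i \alpha_i$; taking $A$ to be the larger of these two bounds finishes the proof. The only subtlety is ensuring $A<\infty$, which relies crucially on $\alpha_i\ge 1$ (for regularity and finiteness near $0$) and on the denominator overpowering the numerator as $x\to\infty$.
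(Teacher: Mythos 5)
Your proof is correct and follows essentially the same route as the paper: the first assertion is obtained by observing that $\ln(1+f(x))-Ax$ vanishes at $x=0$ and has nonpositive derivative, which is exactly the paper's argument with its function $G$. The one difference is that the paper simply states ``the lemma follows'' for the polynomial case, whereas you explicitly verify the hypotheses for $f(x)=\sum_i C_i x^{\alpha_i}$ --- in particular the uniform bound on $f'/(1+f)$, using $\alpha_i\ge 1$ for regularity near $0$ and the domination of the numerator by the denominator for large $x$ --- so your write-up supplies a detail the paper leaves implicit, and both of your arguments for that bound are sound.
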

\begin{proof}
Define $G(x) = \ln(1+f(x))-Ax$. We have $G(0)=0$. And,
\begin{align}
G'(x) = \fr{f'(x)}{1+f(x)}-A\le 0,
\end{align}
so it follows that $G(x)\le 0$ for all $x\ge 0$, and the lemma follows.

\end{proof}


\beg{thm}\la{logsobprop}
Let $2\le d \le 6$ be an integer. Let $p$ be an integer such that $p \in [3, \fr{2d}{d-2}]$ if $d > 2$ and $p \in [3, \infty)$ if $d=2$. Let $\Omega \subset \R^d$ be a bounded, connected domain with smooth boundary. Let $g \in H^1(\Omega)$ be a nonnegative scalar function. Then there exists a positive constant $A_p$ depending only on $p$, $d$, and the diameter of $\Omega$ such that 
\be \la{logsob}
\int_{\Omega} g \ln \frac{g}{\bar{g}} dx
\le A_p {\bar{g}}^{\frac{p-4}{p-2}}\left\|\na \left(g^{\fr{1}{p-2}}\right)\right\|_{L^2}^2.
\ee holds.
\end{thm}

\beg{proof}
As $\frac{g}{|\Omega| \bar{g}} dx$ is a probability measure and $\ln x$ is a concave function, it follows from Jensen's inequality that 
\be \la{relent}
\beg{aligned}
&\int_{\Omega} g \ln \frac{g}{\bar{g}} dx
= \fr{(p-2)|\Omega| \bar{g}}{2} \int_{\Omega} \frac{g}{|\Omega| \bar{g}} \ln \left(\frac{g}{\bar{g}}\right)^{\fr{2}{p-2}} dx
\\&\le \fr{(p-2)|\Omega| \bar{g}}{2} \ln \left(\int_{\Omega}  \frac{g^{\fr{p}{p-2}}}{|\Omega| \bar{g}^{\fr{p}{p-2}}} dx \right)
= \fr{p-2}{2}\|g\|_{L^1} \ln \left(\left\|\fr{1}{|\Omega|^{\fr{1}{p}}} \left(\fr{g}{\bar{g}} \right)^{\fr{1}{p-2}} \right\|_{L^p}^p \right).
\end{aligned}
\ee We let $q = \fr{1}{|\Omega|^{\fr{1}{p}}} \left(\fr{g}{\bar{g}} \right)^{\fr{1}{p-2}}$. A straightforward application of H\"older's inequality with exponents $p-2$ and $\fr{p-2}{p-3}$ ($=\infty$ if $p=3$) yields the estimate
\be \la{qbar}
\bar{q}^2 = \left(\fr{1}{|\Omega|} \int_{\Omega} \fr{1}{|\Omega|^{\fr{1}{p}}} \left(\fr{g}{\bar{g}} \right)^{\fr{1}{p-2}}  dx \right)^2
\le \fr{1}{|\Omega|^{2 + \fr{2}{p}}} \left(\int_{\Omega} \fr{g}{\bar{g}} dx \right)^{\fr{2}{p-2}} |\Omega|^{\fr{2p-6}{p-2}} = |\Omega|^{-2-\frac{2}{p}}|\Omega|^\frac{2}{p-2} |\Omega|^{\frac{2p-6}{p-2}} = |\Omega|^{-\fr{2}{p}},
\ee and a direct computation gives the equality
\be 
\|q\|_{L^{p-2}}^{p-2} = \int_{\Omega} \frac{g}{\bar{g}|\Omega|^{\fr{p-2}{p}}} dx = |\Omega|^{\fr{2}{p}}.
\ee 
Applying Proposition \ref{inte}, the Poincar\'e inequality $\|q- \bar{q}\|_{L^2} \le C\|\na q\|_{L^2}$, and estimate \eqref{qbar}, we deduce that 
\be 
\|q\|_{L^p}^p \le 1 +  C_1 \|\na q\|_{L^2}^2  + C_2\|\na q\|_{L^2}^p + \sum\limits_{k=3}^{p-1}C'_k \|\na q\|_{L^2}^{p-k+2}
\ee 
holds, where $C_1, C_2$ and the $C'_k$'s depend only on $\Omega$ and $p$. As before, the summation is considered empty if $p\le 3$.

Since $\na q = \frac{\na g^{\fr{1}{p-2}}}{|\Omega|^{\fr{1}{p}} \bar{g}^{\fr{1}{p-2}}}$, it follows from Lemma \ref{log} that
\be 
\ln (\|q\|_{L^p}^p) \le  \frac{A'_p}{\bar{g}^\frac{2}{p-2}}\left\|\na \left(g^{\fr{1}{p-2}}\right)\right\|_{L^2}^2,
\ee where $A'_p$ is a positive constant depending only on $p$, $d$, and the diameter of $\Omega$. Returning to \eqref{relent}, we obtain the desired estimate \eqref{logsob}.
\end{proof}

\begin{rem}\la{rem}
    We note that the inequality \eqref{logsob} becomes
    \be\la{specialcase}
    \int_\Omega g\ln\frac{g}{\bar{g}}\,dx\le A_4\|\na \sqrt{g}\|_{L^2}^2
    \ee
    when $d=2,3,4$ and $p=4$. We can compare this inequality to Gross's logarithmic Sobolev inequality \cite{G}, which in particular for $0\le f\in H^1(\mathbb{R}^d)$ says
    \be\la{gross}
    \int_{\mathbb{R}^d}f\ln \frac{f}{\bar{f}}\,d\nu\le C\int_{\mathbb{R}^d}|\na \sqrt{f}|^2\,d\nu
    \ee
    where $\nu$ is the standard Gaussian measure on $\mathbb{R}^d$ and 
    $$\bar{f} = \int_{\mathbb{R}^d}f\,d\nu.$$
    The Bakry-Emery criteria together with the perturbation lemma of Holley and Strook give conditions on probability measures $\mu$ on $\mathbb{R}^d$ for which \eqref{gross} holds with $\nu$ replaced by $\mu$ (see \cite{AMTU} and references therein). In addition, by making using of a convex cutoff function, one can deduce from \eqref{gross} a Poincar\'e-type inequality 
    \be\la{gross2}
    \int_{D}f\ln \frac{f}{\bar{f}}\,d\mu\le C\int_{D}|\na \sqrt{f}|^2\,d\mu
    \ee
    for uniformly convex domains $D$ and admissible probability measures $\mu$ on $D$. Essentially, the inequality \eqref{specialcase} treats the special case of the measure $|\Omega|^{-1}dx$ for $d=2,3,4$ and at least in this context, generalizes \eqref{gross2} to domains that need not be convex. Let us note, however, that the original proofs of \eqref{gross} and \eqref{gross2} follow very different approaches from ours.
\end{rem}

\section{Application: Long-Time Behavior of Solutions to Electrodiffusion Models}\la{app}

The Nernst-Planck (NP) system is a mathematical model for the electrodiffusion of charged particles and is given by
\begin{align}
    \pa_t c_i&= D_i \div (\na c_i+z_ic_i\na\Phi),\quad i=1,\dots,N\la{np}\\
    -\epsilon\D\Phi&=\rho = \sum_{i=1}^Nz_ic_i \la{pois}
\end{align}
The variable $c_i$ represents the local ionic concentration of the $i$-th ionic species and its time evolution is modeled by the Nernst-Planck equations \eqref{np}. The dynamics of $c_i$ are dictated by diffusion due to its own concentration gradient and by the presence of an electrical field, $\na\Phi$. The electrical potential $\Phi$, in turn, is generated by the charge density $\rho$ via a Poisson equation. 

The $D_i>0$ are the constant diffusion coefficients, $\epsilon>0$ represents the dielectric permittivity of the medium, and the $z_i$ are the ionic valences, which may take on positive or negative values. We note that $\epsilon$ is proportional to the square of the Debye length and, in most physical regimes, is a very small constant. This fact motivates the study of the NP system in the context of singular perturbation theory where $\epsilon$ is considered in the limit of $\epsilon\to 0$ \cite{EN,QN}. 

We consider the above system on a bounded domain $\Omega \subset\mathbb{R}^d$ for $d\ge 2$ with smooth boundary. A wide range of physically relevant boundary conditions may be considered (insert ref). Here, we take homogeneous Neumann boundary conditions for $\Phi$ and all $c_i$:
\be
\bal\la{BC}
    {\pa_n c_i}_{|\pa\Omega}&=0,\quad i=1,\dots,N\\
    {\pa_n \Phi}_{|\pa\Omega}&=0.
\eal
\ee
This choice of boundary conditions necessitates a few comments. First, strictly speaking $\Phi$ is determined uniquely only up to a constant from \eqref{pois} and \eqref{BC}. Thus for concreteness, we impose the additional condition that
\begin{align}\la{phimeanfree}
\int_\Omega \Phi\,dx=0
\end{align}
for all time. 

Second, ionic concentration is conserved in the sense that
\begin{align}
    \fr{d}{dt}\int_\Omega c_i\,dx=0\la{conservation}
\end{align}
for each $i$. We note, formally integrating \eqref{pois}, we find that the net charge density must be zero for all time
\begin{align}\la{zerorho}
\int_\Omega \rho\,dx=0\quad t\ge 0 .
\end{align} 
Therefore, to be self consistent, we restrict ourselves to initial conditions such that $\int_\Omega \rho(0)\,dx=0$. Incidentally, this also requires us to have both positive and negative valences $z_i$.

For our choice of boundary conditions, it is well known that the following dissipative energy equality holds:
\begin{align}\la{energy}
    \fr{d}{dt}\mathcal{E}+\mathcal{D}= 0   
\end{align}
where
\begin{align}\la{E}
\mathcal{E} = \sum_{i=1}^N\int_\Omega c_i\ln\fr{c_i}{\bar{c}_i}\,dx+\fr{\epsilon}{2}\int_\Omega |\na\Phi|^2\,dx
\end{align}
where $\bar{c}_i=\fr{1}{|\Omega|}\int_\Omega c_i\,dx$, and
\begin{align}\la{D}
\mathcal{D}=\sum_{i=1}^ND_i\int_\Omega c_i|\na(\ln c_i+z_i\Phi)|^2\,dx.     
\end{align}

For the purpose of this paper, we do not directly address the issue of global regularity of solutions and instead \textit{assume} that we are working with globally regular solutions. We could instead study global weak solutions, in which case the energy equality \eqref{energy} is instead an inequality; however we shall not pursue this path. For the state of the art of global wellposedness and regualrity, we refer the reader to \cite{ci, np3d, cil}.

For regular solutions, the derivation of the above energy equality follows straightforwardly from multiplying \eqref{np} by the quantity $\ln c_i+z_i\Phi$, summing in $i$, and integrating by parts. Indeed, we have
\be
\bal
\sum_{i=1}^ND_i\int_\Omega\div(\na c_i+z_ic_i\na\Phi)(\ln c_i+z_i\Phi)\,dx &=-\sum_{i=1}^ND_i\int_\Omega(\na c_i+z_ic_i\na\Phi)\cdot\na(\ln c_i+z_i\Phi)\,dx \\
&=-\sum_{i=1}^ND_i\int_\Omega c_i|\na(\ln c_i+z_i\Phi)|^2\,dx 
\eal
\ee
where in the first equality, no boundary terms arise due to the boundary conditions \eqref{BC}. On the other hand, on the left hand side of \eqref{np}, we get

\be
\bal
\sum_{i=1}^N\int_\Omega \pa_tc _i(\ln c_i+z_i\Phi)\,dx&=\sum_{i=1}^N\int_\Omega \pa_t (c_i\ln c_i-c_i)+\int_\Omega (\pa_t \rho)\Phi\,dx\\
&=\sum_{i=1}^N\fr{d}{dt}\int_\Omega c_i\ln c_i\,dx-\epsilon\int_\Omega (\pa_t \D\Phi)\Phi\,dx\\
&=\sum_{i=1}^N\fr{d}{dt}\int_\Omega c_i\ln c_i\,dx+\epsilon\int_\Omega (\pa_t \na\Phi)\cdot\na\Phi\,dx\\
&=\sum_{i=1}^N\fr{d}{dt}\int_\Omega c_i\ln c_i\,dx+\fr{\epsilon}{2}\fr{d}{dt}\int_\Omega |\na\Phi|^2\,dx
\eal
\ee
where in the second line we used \eqref{conservation}. Then, we observe that due to \eqref{conservation} again, we have $\fr{d}{dt}\int_\Omega c_i\ln\fr{c_i}{\bar{c}_i}\,dx = \fr{d}{dt}\int_\Omega c_i\ln c_i\,dx$, and this completes the derivation of \eqref{energy}.

We note that the main reason we define $\mathcal{E}$ with $\int_\Omega c_i\ln\fr{c_i}{\bar{c}_i}\,dx$ instead of $\int_\Omega c_i\ln c_i\,dx$ is that (as we will see below) $c_i\to \bar{c}_i$ in the time asymptotic limit, and thus we view $\int_\Omega c_i\ln\fr{c_i}{\bar{c}_i}\,dx$ as an expression of relative entropy, which in a way measures the distance between $c_i$ and $\bar{c}_i$.

For our choice of boundary conditions \eqref{BC}, we have the following lower bound for $\mathcal{D}$
\be
\bal
\mathcal{D}&\ge D\sum_{i=1}^N\int_\Omega c_i|\na(\ln c_i+z_i\Phi)|^2\,dx\\
&=D\sum_{i=1}^N\int_\Omega c_i\left|\fr{\na c_i}{c_i}+z_i\na\Phi\right|^2\,dx\\
&= D\sum_{i=1}^N\int_\Omega \fr{|\na c_i|^2}{c_i}+2z_i\na c_i\cdot\na\Phi+z_i^2c_i|\na \Phi|^2\,dx\\
&= D\sum_{i=1}^N\int_\Omega 4|\na \sqrt{c_i}|^2+2z_i\na c_i\cdot\na\Phi+z_i^2c_i|\na \Phi|^2\,dx\\
&= D\sum_{i=1}^N\int_\Omega 4|\na \sqrt{c_i}|^2+z_i^2c_i|\na \Phi|^2\,dx+\fr{2D}{\epsilon}
\int_\Omega \rho^2\,dx\\
&=:\tilde{\mathcal{D}}
\eal
\ee
where $D=\min_{i}D_i$, and the second to last line follows from integrating by parts using the boundary conditions \eqref{BC} and the Poisson equation \eqref{pois}. In summary, we have shown,

\begin{prop}\la{energyprop}
    Let $c_1,...,c_N$ be a classical solution of \eqref{np}-\eqref{pois} with initial concentrations satisfying boundary conditions \eqref{BC} and net electroneutrality,
    $$\sum_{i=1}^Nz_i\bar{c}_i =0$$
    where $\bar{c}_i=\frac{1}{|\Omega|}\int_\Omega c_i(0)\,dx.$ Then $c_1,...,c_N$ satisfy
    \begin{align}\la{energyb}
    \fr{d}{dt}\mathcal{E}+\mathcal{D}= 0   
    \end{align}
    where
    \begin{align}
    \mathcal{E} = \sum_{i=1}^N\int_\Omega c_i\ln\fr{c_i}{\bar{c}_i}\,dx+\fr{\epsilon}{2}\int_\Omega |\na\Phi|^2\,dx
    \end{align}
    and
    \begin{align}
    \mathcal{D}=\sum_{i=1}^ND_i\int_\Omega c_i|\na(\ln c_i+z_i\Phi)|^2\,dx.  
    \end{align}
    In particular, $c_1,...,c_N$ satisfy
    \begin{align}\la{energyc}
    \fr{d}{dt}\mathcal{E}+\tilde{\mathcal{D}}\le 0   
    \end{align}
    where
    \begin{align}
        \tilde{\mathcal{D}}=D\left(\sum_{i=1}^N\int_\Omega 4|\na \sqrt{c_i}|^2\,dx+\int_\Omega \fr{2}{\epsilon}\rho^2+z_i^2c_i|\na \Phi|^2\,dx\right)
    \end{align}
    and $D=\min_i D_i$.
\end{prop}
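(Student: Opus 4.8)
The plan is to establish \eqref{energyb} through the standard entropy--dissipation computation and then obtain \eqref{energyc} by bounding the dissipation $\mathcal{D}$ from below. First I would multiply the $i$-th equation in \eqref{np} by the chemical potential $\ln c_i + z_i\Phi$, sum over $i=1,\dots,N$, and integrate over $\Omega$. Integrating by parts on the right-hand side and using the Neumann conditions \eqref{BC} to discard every boundary term produces exactly $-\mathcal{D}$, since $\na c_i + z_i c_i \na\Phi = c_i\na(\ln c_i + z_i\Phi)$.

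The crux of recovering $\tfrac{d}{dt}\mathcal{E}$ lies in rewriting the time-derivative side. I would split $\pa_t c_i(\ln c_i + z_i\Phi) = \pa_t(c_i\ln c_i - c_i) + z_i\Phi\,\pa_t c_i$. Summing the second piece over $i$ gives $\Phi\,\pa_t\rho$, and substituting the Poisson relation \eqref{pois} followed by one integration by parts (again killing the boundary term via \eqref{BC}) turns $\int_\Omega \Phi\,\pa_t\rho$ into $\tfrac{\epsilon}{2}\tfrac{d}{dt}\int_\Omega|\na\Phi|^2$. For the first piece, the term $\int_\Omega\pa_t c_i = \tfrac{d}{dt}\int_\Omega c_i$ vanishes by the conservation law \eqref{conservation}; the same conservation law makes each $\bar{c}_i$ constant in time, so $\tfrac{d}{dt}\int_\Omega c_i\ln c_i = \tfrac{d}{dt}\int_\Omega c_i\ln\tfrac{c_i}{\bar{c}_i}$. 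Assembling these identities yields \eqref{energyb}.

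For \eqref{energyc} I would set $D = \min_i D_i$ and bound $\mathcal{D} \ge D\sum_i\int_\Omega c_i|\na(\ln c_i + z_i\Phi)|^2$. Expanding the square gives the three terms $\tfrac{|\na c_i|^2}{c_i}$, $2z_i\na c_i\cdot\na\Phi$, and $z_i^2 c_i|\na\Phi|^2$. The first equals $4|\na\sqrt{c_i}|^2$ by the chain rule, and summing the cross terms over $i$ produces $2\int_\Omega\na\rho\cdot\na\Phi$, which after integrating by parts and invoking \eqref{pois} equals $\tfrac{2}{\epsilon}\int_\Omega\rho^2$; this delivers $\tilde{\mathcal{D}}$.

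The main obstacle is not algebraic but one of admissibility and rigor: the manipulations require $c_i > 0$ so that $\ln c_i$ and $\tfrac{\na c_i}{c_i}$ are well defined, and they require enough regularity and integrability to justify every integration by parts and the exchange of $\pa_t$ with $\int_\Omega$. These are granted by the standing assumption that the $c_i$ are classical solutions. I would also note that the electroneutrality hypothesis $\sum_i z_i\bar{c}_i = 0$ is precisely the compatibility condition $\int_\Omega\rho = 0$ that makes the Neumann Poisson problem \eqref{pois}--\eqref{BC} solvable (so that $\Phi$, normalized by \eqref{phimeanfree}, exists), and that it is preserved in time by \eqref{conservation}.
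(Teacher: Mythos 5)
Your proposal is correct and follows essentially the same argument as the paper: multiplying \eqref{np} by $\ln c_i + z_i\Phi$, using conservation of mass and the Poisson equation to identify the time-derivative side with $\tfrac{d}{dt}\mathcal{E}$, and then bounding $\mathcal{D}$ below by expanding the square and converting the cross term into $\tfrac{2}{\epsilon}\int_\Omega\rho^2\,dx$ via integration by parts and \eqref{pois}. Your additional remarks on positivity, regularity, and the electroneutrality compatibility condition are consistent with the paper's standing assumptions.
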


For the second term in the dissipative term $\tilde{\mathcal{D}}$, we remark that from \eqref{pois} and the fact that $\Phi$ satisfies homogeneous Neumann boundary conditions, we have the following elliptic estimate \cite{M}
\begin{align}\la{elliptic}
\fr{2}{\epsilon}\int_\Omega \rho^2\,dx=2\epsilon\int_\Omega \left(\fr{\rho}{\epsilon}\right)^2\,dx\ge C\epsilon \int_\Omega|\na\Phi|^2\,dx 
\end{align}
where $C>0$ depends only on $\Omega$. In addition, for the first term in $\tilde{\mathcal{D}}$, we have the following bounds,

\begin{prop}\la{clogsob}
For dimensions $d=2,3,4$, we have
\begin{align}
\int_\Omega c_i\ln\fr{c_i}{\bar{c}_i}\,dx\le C\int_\Omega |\na \sqrt{c_i}|^2\,dx
\end{align}
for each $i$, where $C$ depends only on $\Omega$. 
\end{prop}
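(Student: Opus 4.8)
The plan is to obtain this as the special case $p=4$ of the logarithmic Sobolev inequality established in Theorem \ref{logsobprop}, applied to each concentration $g=c_i$. The first thing I would do is verify that $p=4$ is an admissible exponent in exactly the stated dimensions. Theorem \ref{logsobprop} requires, for $d>2$, that $p$ be an integer with $3\le p\le\fr{2d}{d-2}$. Evaluating the upper bound gives $6$ when $d=3$ and $4$ when $d=4$, so $p=4$ lies in the admissible range for $d=3$ (strictly interior) and $d=4$ (at the endpoint), while for $d=2$ every integer $p\ge 3$ is allowed. This is precisely why the proposition is restricted to $d=2,3,4$: for $d\ge 5$ one has $\fr{2d}{d-2}<4$, so $p=4$ is no longer available.

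Next I would apply Theorem \ref{logsobprop} with $g=c_i$ and $p=4$. This requires each $c_i$ to be a nonnegative function in $H^1(\Omega)$; nonnegativity is the physically meaningful property of ionic concentrations, preserved by the parabolic maximum principle from nonnegative initial data, and the Sobolev regularity follows from the standing assumption that $c_1,\dots,c_N$ is a classical solution. Substituting $p=4$ into \eqref{logsob}, the prefactor exponent becomes $\fr{p-4}{p-2}=0$, so $\bar{c}_i^{\,0}=1$ and the dependence on the average $\bar{c}_i$ on the right-hand side disappears, while $g^{\fr{1}{p-2}}=c_i^{1/2}=\sqrt{c_i}$. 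This directly yields
\[
\int_\Omega c_i\ln\fr{c_i}{\bar{c}_i}\,dx \le A_4 \int_\Omega \left|\na\sqrt{c_i}\right|^2\,dx,
\]
which is the claimed inequality with $C=A_4$.

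Finally, I would record that the constant $A_4$ produced by Theorem \ref{logsobprop} depends only on $p$, $d$, and the diameter of $\Omega$; since $p=4$ and $d$ are now fixed, $A_4$ depends on $\Omega$ alone, as asserted. There is essentially no genuine analytic obstacle in this step, as all of the substantive work resides in Proposition \ref{inte} and Theorem \ref{logsobprop}. The only points that require care are the elementary bookkeeping that $p=4$ is admissible exactly when $d\le 4$, and the observation that the vanishing of the exponent $\fr{p-4}{p-2}$ is what cleanly removes the average $\bar{c}_i$ from the estimate, producing the clean dimensionless bound needed later to control the first term of the dissipation $\tilde{\mathcal{D}}$.
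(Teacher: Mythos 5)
Your proposal is correct and follows exactly the paper's own proof: the paper also obtains this proposition by applying Theorem \ref{logsobprop} with $g=c_i$, $p=4$, and $d=2,3,4$. Your additional bookkeeping (verifying $p=4\le\fr{2d}{d-2}$ precisely when $d\le 4$, and noting that the exponent $\fr{p-4}{p-2}$ vanishes so the average $\bar{c}_i$ drops out) is a correct and welcome elaboration of the same one-line argument.
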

\begin{proof}
This follows from Theorem \ref{logsobprop} by taking $g=c_i$ and  $d=2,3,4$, $p=4$.
\end{proof}

Now we state and prove our main result of this section:

\begin{thm}
    Under the same assumptions as Proposition \ref{energyprop}, the concentrations $c_1,...,c_N$ converge to $\bar{c}_1,...,\bar{c}_N$ in relative entropy and $\na\Phi$ converges to $0$ in $L^2$, that is,
    \be\la{expconv1}
    \mathcal{E}(t)\le \mathcal{E}(0)e^{-rt}
    \ee
    where $r>0$ depends only on the dimension, $D_i$ and $\Omega$. In particular, $c_i$ converges to $\bar{c}_i$ in $L^1$:
    \be\la{expconv2}
    \sum_{i=1}^N\frac{\|c_i(t)-\bar{c}_i\|_{L^1}^2}{\bar{c}_i}\le C_\Omega\mathcal{E}(0)e^{-rt}
    \ee
    where $C_\Omega>0$ depends only on $\Omega$.
\end{thm}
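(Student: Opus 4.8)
The plan is to convert the dissipation inequality \eqref{energyc} into a closed differential inequality $\frac{d}{dt}\mathcal{E} \le -r\mathcal{E}$ and then integrate. Since \eqref{energyc} already gives $\frac{d}{dt}\mathcal{E} + \tilde{\mathcal{D}} \le 0$, the entire problem reduces to establishing a coercivity estimate $\tilde{\mathcal{D}} \ge r\mathcal{E}$ with $r$ independent of time. I would exploit the fact that both $\mathcal{E}$ and $\tilde{\mathcal{D}}$ split into an entropic piece and an electrostatic piece, and bound these pieces against each other separately.

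For the entropic piece, I apply the logarithmic Sobolev inequality of Proposition \ref{clogsob} to each species, $\int_\Omega c_i\ln\frac{c_i}{\bar{c}_i}\,dx \le C_1\int_\Omega|\na\sqrt{c_i}|^2\,dx$, so that the term $4D\sum_{i}\int_\Omega|\na\sqrt{c_i}|^2\,dx$ inside $\tilde{\mathcal{D}}$ controls $\frac{4D}{C_1}\sum_i\int_\Omega c_i\ln\frac{c_i}{\bar{c}_i}\,dx$. For the electrostatic piece, I use the elliptic estimate \eqref{elliptic} in the form $\frac{2D}{\epsilon}\int_\Omega\rho^2\,dx \ge C_2 D\epsilon\int_\Omega|\na\Phi|^2\,dx = 2C_2 D\cdot\frac{\epsilon}{2}\int_\Omega|\na\Phi|^2\,dx$; crucially the factors of $\epsilon$ cancel against the $\frac{\epsilon}{2}$ weighting in $\mathcal{E}$, so the resulting constant $2C_2 D$ is $\epsilon$-independent. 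Discarding the manifestly nonnegative leftover $D\sum_i\int_\Omega z_i^2 c_i|\na\Phi|^2\,dx$ in $\tilde{\mathcal{D}}$ and combining the two bounds, I obtain $\tilde{\mathcal{D}} \ge r\mathcal{E}$ with $r = \min\{4D/C_1,\, 2C_2 D\}$, which depends only on $d$, the $D_i$ through $D=\min_i D_i$, and $\Omega$. Grönwall's inequality applied to $\frac{d}{dt}\mathcal{E} \le -r\mathcal{E}$ then yields \eqref{expconv1}.

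For the $L^1$ convergence \eqref{expconv2}, I would invoke a Csisz\'ar--Kullback--Pinsker (Pinsker) inequality to trade relative entropy for the $L^1$ distance. Writing $\mu_i = \frac{c_i}{\bar{c}_i|\Omega|}\,dx$ as a probability measure with the uniform measure $\nu = \frac{1}{|\Omega|}\,dx$ as reference, Pinsker's inequality $\left(\int|d\mu_i/d\nu - 1|\,d\nu\right)^2 \le 2\,D_{KL}(\mu_i\|\nu)$ becomes, after clearing the normalizing constants, $\|c_i - \bar{c}_i\|_{L^1}^2 \le 2|\Omega|\,\bar{c}_i\int_\Omega c_i\ln\frac{c_i}{\bar{c}_i}\,dx$. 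Dividing by $\bar{c}_i$, summing over $i$, and using that each relative entropy is nonnegative so that $\sum_i\int_\Omega c_i\ln\frac{c_i}{\bar{c}_i}\,dx \le \mathcal{E}(t)$, gives $\sum_i\frac{\|c_i-\bar{c}_i\|_{L^1}^2}{\bar{c}_i} \le 2|\Omega|\,\mathcal{E}(t)$; then \eqref{expconv1} yields \eqref{expconv2} with $C_\Omega = 2|\Omega|$.

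The crucial step --- and the one the whole argument hinges on --- is the coercivity $\tilde{\mathcal{D}} \ge r\mathcal{E}$, whose entropic half relies entirely on the logarithmic Sobolev inequality of Proposition \ref{clogsob}. This is precisely what upgrades the decay from algebraic to exponential, and it is also what silently restricts the theorem to $d \in \{2,3,4\}$, since $p=4$ is admissible in Theorem \ref{logsobprop} only for $d \le 4$; I would state this restriction explicitly. By comparison, the elliptic estimate, the Pinsker inequality, and the final Grönwall argument are all standard, the only mild bookkeeping being the verification that the factors of $\epsilon$ cancel so that $r$ is genuinely independent of the (physically small) permittivity.
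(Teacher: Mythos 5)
Your proposal is correct and follows essentially the same route as the paper: the paper likewise combines \eqref{energyc}, the elliptic estimate \eqref{elliptic}, and Proposition \ref{clogsob} to obtain $\frac{d}{dt}\mathcal{E}+CD\mathcal{E}\le 0$ (your coercivity bound $\tilde{\mathcal{D}}\ge r\mathcal{E}$ with the $\epsilon$-factors cancelling is exactly the compressed step there), integrates, and then applies the Csisz\'ar--Kullback--Pinsker inequality precisely as you do, with the same constant $C_\Omega=2|\Omega|$. One harmless slip: the bound $\sum_i\int_\Omega c_i\ln\frac{c_i}{\bar{c}_i}\,dx\le\mathcal{E}(t)$ follows from nonnegativity of the electrostatic term $\frac{\epsilon}{2}\int_\Omega|\nabla\Phi|^2\,dx$, not from nonnegativity of the relative entropies; your remark that the theorem implicitly requires $d\in\{2,3,4\}$ (so that $p=4$ is admissible in Theorem \ref{logsobprop}) is correct.
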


\begin{proof}
It follows from \eqref{energyc}, \eqref{elliptic}, and Proposition \ref{clogsob} that
\begin{align}
    \fr{d}{dt}\mathcal{E}+CD\mathcal{E}\le 0
\end{align}
where $C$ depends only on $\Omega$. Then, \eqref{expconv1} follows from the above inequality. Next, using the Csiszar-Kullback-Pinsker inequality (see e.g. \cite{T}), we have for each $i$,
\begin{align}
    \|c_i-\bar{c}_i\|_{L^1}^2\le 2|\Omega|\bar{c}_i\int_\Omega c_i\ln \frac{c_i}{\bar{c}_i}\,dx. 
\end{align}
Then, \eqref{expconv2} follows from \eqref{expconv1} and the above inequality.
\end{proof}

\begin{rem}
In light of this theorem, we briefly mention a closely related work, which also addresses the question of exponential convergence to equilibrium in the context of the Nernst-Planck system. In \cite{BD}, the authors establish the exponential convergence to equilibrium for the Nernst-Planck system for nonlinear no-flux boundary conditions. For this choice of boundary conditions, an analog of the dissipative equality from Proposition \ref{energyprop} holds, and the authors use logarithmic Sobolev inequalities derived in \cite{AMTU} to control the dissipative term corresponding to $\mathcal{D}$. One main difference with our work is that the logarithmic Sobolev inequality from \cite{AMTU} is obtained from a more general result that holds on Euclidean spaces (as opposed to bounded domains); and, the restriction from Euclidean spaces to bounded domains necessitates the use of a convex cut-off function, which restricts the applicability of the inequality to functions defined on uniformly convex domains (see Remark \ref{rem}). On the other hand, with our logarithmic Sobolev inequality, we only require that the domain is connected (in particular, it need not even be simply connected).

Another difference has to do with the rate of convergence $r$. In \cite{BD}, the logarithmic Sobolev inequality is used where the underlying probability measure is $e^{-z_i\Phi}/\left(\int_\Omega e^{-z_i\Phi}\,dx\right)dx,$ in analogy with the classical logarithmic Sobolev inequality for Gaussian measures. The use of this measure is what allows for the application of the inequality from \cite{AMTU}. On the other hand, this means that the rate of convergence ends up depending on pointwise lower and upper bounds on the potential $\Phi$. While uniform in time $L^\infty$ bounds on $\Phi$ are obtainable (see e.g. \cite{ci}), sharp bounds are unknown, and all known bounds depend on negative powers of the parameter $\epsilon$. In particular, these bounds are unbounded in the electroneutral (or quasi-neutral) limit of $\epsilon\to 0$. In contrast, the rate $r>0$ obtained in our theorem depends naturally on just the diffusion coefficients $D_i$ and the domain $\Omega$. In particular, this rate is independent of $\epsilon$, which is significant in the context of the study of the electroneutral limit. We point out, however, that our ability to derive this $\epsilon$-independent rate depended crucially on our choice of boundary conditions \eqref{BC}, and it is not obvious how to generalize our proof to more complex, nonlinear boundary conditions, which preserve the dissipative structure.
\end{rem}

\end{document}